\newtheorem{theorem}{Theorem}
\newtheorem{lemma}[theorem]{Lemma}
\newtheorem{example}[theorem]{Example}
\newtheorem{definition}[theorem]{Definition}
\newcommand{\diam}{\ensuremath{\,\mathrm{diam}}}
\newcommand{\la}{\langle}
\newcommand{\ra}{\rangle}
\title{Lower Bounds on the Low-Distortion Embedding Dimension of Submanifolds of $\mathbb{R}^n$}
\author{Mark Iwen\thanks{Michigan State University, Department of Mathematics, and the Department of Computational Mathematics, Science and Engineering (CMSE), \texttt{markiwen@math.msu.edu}.  Supported in part by NSF DMS 1912706.}, 
Benjamin Schmidt\thanks{Michigan State Univeristy, Department of Mathematics, \texttt{schmidt@math.msu.edu}.  Supported in part by a Simons Collaboration Grant.}, 
Arman Tavakoli\thanks{Michigan State University, Department of Mathematics, \texttt{tavakol4@msu.edu}.  Supported in part by NSF DMS 1912706, and by an MSU College of Natural Science Dissertation Completion Fellowship.}
}
\begin{document}

\maketitle

\begin{abstract}
Let $\mathcal{M}$ be a smooth submanifold of   $\mathbb{R}^n$ equipped with the Euclidean (chordal) metric.  This note considers the smallest dimension $m$ for which there exists a bi-Lipschitz function $f: \mathcal{M} \mapsto \mathbb{R}^m$ with bi-Lipschitz constants close to one.  The main result bounds the embedding dimension $m$ below in terms of the bi-Lipschitz constants of $f$ and the reach, volume, diameter, and dimension of $\mathcal{M}$.  This new lower bound is applied to show that prior upper bounds by Eftekhari and Wakin \cite{eftekhari_new_2015} on the minimal low-distortion embedding dimension of such manifolds using random matrices achieve near-optimal dependence on both reach and volume.  This supports random linear maps as being nearly as efficient as the best possible nonlinear maps at reducing the ambient dimension for manifold data.  In the process of proving our main result, we also prove similar results concerning the impossibility of achieving better nonlinear measurement maps with the Restricted Isometry Property (RIP) in compressive sensing applications.
\end{abstract}

\section{Introduction}

Given a set $T \subset \mathbb{R}^n$ and $\epsilon \in (0,1)$ we consider functions $f: T \mapsto \mathbb{R}^m$ satisfying 
$$(1 - \epsilon) \| {\bf x}- {\bf y} \|_2 \leq \| f( {\bf x}) - f({\bf y}) \|_2 \leq (1 + \epsilon) \| {\bf x}- {\bf y} \|_2$$
for all ${\bf x}, {\bf y} \in T$, where $\| \cdot \|_2$ denotes the $\ell_2$-norm.  Such a function $f$ is said to be an $\epsilon$-distortion (Euclidean) embedding of $T$ into $\mathbb{R}^m$ with embedding dimension $m$.  The use of such embeddings has exploded in growth during the last two decades in computer science (where $T$ is often a finite point set), numerical linear algebra (where $T$ is often a low-dimensional subspace), and signal processing (where $T$ is often a manifold, or all $s$-sparse vectors in $\mathbb{R}^n$).  This is due in large part to results of Johnson and Lindenstrauss (JL) \cite{lindenstrauss1984extensions} and their later simplifications (see, e.g., \cite{dasgupta1999elementary,achlioptas2003database}), as well as subsequent developments in compressive sensing \cite{foucart_mathematical_2013} which showed that random matrices could easily provide such embeddings.  Questions concerning how sub-optimal such linear embeddings might be at reducing the low-distortion embedding dimension $m$ in comparison to the best possible nonlinear $f$ have remained less well explored, however see, e.g., \cite{larsen2017optimality} for a rare result of this kind concerning finite sets $T$.\\

In this note we are principally interested in the optimality of random matrices as $\epsilon$-distortion embeddings of submanifolds of $\mathbb{R}^n$ into $\mathbb{R}^m$.  More specifically, we will consider the optimality of the following result with respect to bounding the low-distortion embedding dimension $m$.

\begin{theorem}[Theorem 2 in \cite{eftekhari_new_2015}]
\label{Thm:Wakin}
Let $\mathcal{M}$ be a compact $d$-dimensional Riemannian submanifold of $\mathbb{R}^n$ having reach $\tau$ and volume $V$.  Conveniently assume that 
\begin{equation}
\label{equ:WakinAssumption}
\frac{V}{\tau^d} \geq \left( \frac{21}{2 \sqrt{d}} \right)^d.
\end{equation}
Fix $0 < \epsilon \leq 1/3$ and $0 < \rho < 1$.  Let $\Phi$ be a random $m \times n$ matrix populated with i.i.d. zero-mean Gaussian random variables with variance of $1/m$ with 
$$m \geq 18 \epsilon^{-2} \max \left( 24 d + 2d \log \left( \frac{\sqrt{d}}{\tau \epsilon^2} \right) + \log(2 V^2),~\log \left(\frac{8}{\rho} \right) \right).$$
Then with probability at least $1-\rho$ the following statement holds:  For every pair of points ${\bf p}, {\bf q} \in \mathcal{M}$,
$$(1 - \epsilon) \| {\bf p}- {\bf q} \|_2 \leq \| \Phi {\bf p}- \Phi {\bf q} \|_2 \leq (1 + \epsilon) \| {\bf p}- {\bf q} \|_2.$$
\end{theorem}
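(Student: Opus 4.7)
My plan is to prove this via a covering-net argument combined with a tangent-space approximation, which is the standard route for manifold embedding results. First, I would leverage the reach condition to construct an economical finite $\delta$-net $N \subset \mathcal{M}$: by the geometry of positive-reach sets (in the spirit of Niyogi--Smale--Weinberger), balls of radius $\delta \lesssim \tau$ intersected with $\mathcal{M}$ have $d$-volume comparable to $\delta^d$, so a greedy packing/covering argument yields a net of size $|N| \leq C\,V/\delta^d$. The assumption \eqref{equ:WakinAssumption} ensures this covering bound is meaningful (roughly, that $\mathcal{M}$ is genuinely large compared to its curvature scale), and $\log |N|$ will end up producing the $2d \log(\sqrt{d}/(\tau\epsilon^2)) + \log V^2$ term in the lower bound on $m$.

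Next I would apply the standard Gaussian Johnson--Lindenstrauss concentration to two families of vectors simultaneously. Event (A): the $O(|N|^2)$ pairwise difference vectors $\mathbf{p} - \mathbf{q}$ for $\mathbf{p},\mathbf{q} \in N$ are preserved by $\Phi$ up to distortion $\epsilon/C$. Event (B): at each $\mathbf{p} \in N$, $\Phi$ acts as a $(1\pm\epsilon/C)$-isometry on the tangent subspace $T_{\mathbf{p}}\mathcal{M}$, which follows from concentration on a fine net of the unit sphere of a $d$-dimensional subspace. A union bound over the $|N|$ tangent spaces and $|N|^2$ pairs absorbs into the stated bound; the $\log(8/\rho)$ term is the usual cost of forcing the total failure probability below $\rho$.

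The reach then does the remaining geometric work. For any pair $\mathbf{p}, \mathbf{q} \in \mathcal{M}$, I would pick net points $\tilde{\mathbf{p}}, \tilde{\mathbf{q}} \in N$ within $\delta$ and decompose
$$\mathbf{p} - \tilde{\mathbf{p}} = \mathbf{v}_{\mathbf p} + \mathbf{n}_{\mathbf p}, \qquad \mathbf{v}_{\mathbf p} \in T_{\tilde{\mathbf p}}\mathcal{M},$$
where the normal residual $\mathbf{n}_{\mathbf p}$ satisfies $\|\mathbf{n}_{\mathbf p}\| \leq \delta^2/(2\tau)$ by the standard second-order bound for submanifolds of reach at least $\tau$. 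Applying $\Phi$ and combining events (A) and (B) with the crude bound $\|\Phi\| \cdot \|\mathbf{n}_{\mathbf p}\| \lesssim \sqrt{n/m}\cdot \delta^2/\tau$ on the normal parts, a triangle-inequality bookkeeping yields the desired $(1\pm \epsilon)$ distortion on $\|\mathbf{p}-\mathbf{q}\|$ provided $\delta$ is chosen of order $\tau \epsilon^2/\sqrt{d}$ (which is precisely where the $\sqrt{d}/(\tau\epsilon^2)$ factor inside the logarithm enters).

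The main obstacle, and the place the constants $18$, $24d$, etc.\ are actually determined, is this final bookkeeping step: the total distortion decomposes into a net-scale error, two tangent-projection errors, and two second-order curvature errors, and all of these must be simultaneously driven below $\epsilon \|\mathbf{p}-\mathbf{q}\|$ at every scale $\|\mathbf{p}-\mathbf{q}\|$, including the delicate regime $\|\mathbf{p}-\mathbf{q}\| \approx \delta$. The choice $\delta \asymp \tau\epsilon^2/\sqrt{d}$ is forced by requiring $\delta^2/\tau \lesssim \epsilon \delta/\sqrt{d}$, and a clean way to carry this out uniformly is to work with the chord/geodesic comparison on $\mathcal{M}$ (valid at scales below $\tau$) and to express everything in terms of unit secant vectors, for which $\Phi$ is controlled by events (A) and (B) combined. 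Once this is balanced, the remaining work is algebraic rearrangement to produce the sample complexity in the stated form.
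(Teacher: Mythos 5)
This theorem is not proved in the paper under review --- it is quoted verbatim from Eftekhari and Wakin \cite{eftekhari_new_2015} as a known upper bound that the paper's own (lower-bound) main result is then compared against. So there is no in-paper proof to check your outline against.

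That said, your sketch is the standard Baraniuk--Wakin--style covering-net argument and it is in the right genre, but as written it contains a concrete gap that would prevent it from delivering the \emph{stated} bound. In the step where you control the image of the normal residual by the crude operator-norm estimate $\|\Phi\|\cdot\|\mathbf{n}_{\mathbf p}\| \lesssim \sqrt{n/m}\cdot \delta^2/\tau$, you reintroduce a dependence on the ambient dimension $n$. For a Gaussian matrix with i.i.d.\ $\mathcal{N}(0,1/m)$ entries one has $\|\Phi\|_{\mathrm{op}} \approx 1 + \sqrt{n/m}$ with high probability, so forcing $\sqrt{n/m}\,\delta^2/\tau \lesssim \epsilon\,\delta$ requires $\delta \lesssim \epsilon\tau\sqrt{m/n}$; this makes the net size, hence $\log|N|$, hence $m$, depend on $\log n$. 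The theorem you are trying to prove has \emph{no} ambient-dimension dependence, and eliminating exactly that $\log n$ factor from the earlier Baraniuk--Wakin manifold Johnson--Lindenstrauss bound was the central contribution of Eftekhari and Wakin. To recover the stated sample complexity you would need to control $\Phi$ restricted to the relevant low-dimensional structures --- e.g., by covering the normalized secant set of $\mathcal{M}$, or by a refined treatment of the normal residuals that exploits their near-tangency rather than appealing to $\|\Phi\|_{\mathrm{op}}$ on all of $\mathbb{R}^n$. Beyond that one issue, the covering estimate $|N| \lesssim V/\delta^d$, the tangent-space isometry events, the second-order curvature bound $\|\mathbf{n}_{\mathbf p}\| \leq \delta^2/(2\tau)$, and the scale choice $\delta \asymp \tau\epsilon^2/\sqrt{d}$ are all consistent with the known proof strategy.
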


The main result of this paper is a lower bound which demonstrates the near-optimality of  Theorem~\ref{Thm:Wakin}.  In particular, we will see that it is generally necessary for $m$ to exhibit a logarithmic dependence on both the volume $V$ and the inverse of reach $1/\tau$ when \eqref{equ:WakinAssumption} holds.

\subsection{The Main Result and Some Examples}

In this paper we prove the following lower bound for the low-distortion embedding dimension of a smooth submanifold of $\mathbb{R}^n$.

\begin{theorem}[Main Result]
\label{MAINTHM}
Let $\mathcal{M}$ be a $d$-dimensional smooth submanifold of $\mathbb{R}^n$, possibly with boundary, that has volume $V$ and reach $\tau$.\footnote{See Definition~\ref{def:Reach} for the formal definition of reach.} Furthermore, let $\epsilon \in (0,1)$, and suppose that $f: \mathcal{M} \mapsto \mathbb{R}^m$ satisfies 
$$(1 - \epsilon) \| {\bf p} - {\bf q} \|_2 \leq \| f({\bf p}) - f({\bf q}) \|_2 \leq (1 + \epsilon) \| {\bf p} - {\bf q} \|_2$$
for all ${\bf p}, {\bf q} \in \mathcal{M}$.  If $\frac{1}{4\sqrt{e}}(\frac{V}{\omega_d}) ^{\frac{1}{d}} \leq \tau$ then

\begin{align*}
m \geq C_1 \left( \frac{1-\epsilon}{1+\epsilon} \right)^2 \frac{d}{\mathrm{diam}^2(\mathcal{M})} \left(\frac{V}{\omega_d} \right)^{\frac{2}{d}}.
\end{align*}

Else, if $\tau \leq \frac{1}{4\sqrt{e}}(\frac{V}{\omega_d}) ^{\frac{1}{d}}$ then

\begin{align*}
m \geq C_2  \left(\frac{1-\epsilon}{1+\epsilon} \right)^2 \frac{d \tau^2 }{\mathrm{diam}^2(\mathcal{M})} \log \left(\frac{V^{\frac{1}{d}}}{(\omega_d)^{\frac{1}{d}} (4\tau)} \right).
\end{align*}

Here $C_1, C_2 > 0$ are universal constants that are independent of all other quantities, $\omega_{d} := \frac{\pi^{d/2}}{\Gamma(\frac{d}{2}+1)}$ is the volume of the $d$-dimensional Euclidean unit ball, and $\mathrm{diam}(\mathcal{M}) := \sup_{{\bf p}, {\bf q} \in \mathcal{M}} \| {\bf p}- {\bf q} \|_2$.
\end{theorem}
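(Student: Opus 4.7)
The plan is to combine a chordal packing argument on $\mathcal{M}$ with the standard Euclidean volumetric packing bound applied to the image $f(\mathcal{M}) \subset \mathbb{R}^m$. The key geometric ingredient is a local volume estimate: there exist universal constants $c, C > 0$ such that for every $\mathbf{p} \in \mathcal{M}$ and every $s \leq c\tau$,
\[
\mathrm{vol}_d\bigl(\mathcal{M} \cap B_s(\mathbf{p})\bigr) \leq C \omega_d s^d.
\]
This follows from the reach hypothesis: the reach bound forces the second fundamental form of $\mathcal{M}$ to have operator norm at most $1/\tau$, so inside any chordal ball of radius below a fixed fraction of $\tau$ the manifold is a graph over its tangent plane with Lipschitz constant close to $1$, and a standard change of variables yields the estimate.

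Given this bound, a maximal $s$-separated subset $\{\mathbf{p}_1, \ldots, \mathbf{p}_N\} \subset \mathcal{M}$ (in the chordal metric) automatically forms an $s$-cover of $\mathcal{M}$, so $V \leq N \cdot C\omega_d s^d$, i.e.\ $N \geq (R_\ast/s)^d/C$ where $R_\ast := (V/\omega_d)^{1/d}$, provided $s \leq c\tau$. The images $\{f(\mathbf{p}_i)\}$ are pairwise separated by at least $(1-\epsilon)s$ and lie in a Euclidean ball of radius $(1+\epsilon)\diam(\mathcal{M})/2$, so the classical volumetric packing inequality yields
\[
N \leq \left(1 + \frac{(1+\epsilon)\diam(\mathcal{M})}{(1-\epsilon)s}\right)^m.
\]
Combining these and taking logarithms produces a one-parameter family of lower bounds $m \geq (d\log(R_\ast/s) - O(1))/\log\bigl(1 + (1+\epsilon)\diam(\mathcal{M})/((1-\epsilon)s)\bigr)$, to be optimized over $s$.

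The two stated cases correspond to the two natural choices of $s$, both subject to $s \leq c\tau$. In Case~1 ($R_\ast \leq 4\sqrt{e}\tau$), set $s$ equal to a small universal-constant multiple of $R_\ast$, which is automatically $\leq c\tau$; this makes $\log(R_\ast/s)$ a positive constant and hence the numerator of order $d$. In Case~2 ($R_\ast > 4\sqrt{e}\tau$), only $s$ of order $\tau$ is permitted, which makes the numerator of order $d\log(R_\ast/\tau)$. In either case, the stated form of the bound follows by applying the elementary inequality $\log(1+y) \leq y^2$ (valid for $y \geq 1$) to control the denominator, which inserts a factor of $\bigl((1-\epsilon)s/((1+\epsilon)\diam(\mathcal{M}))\bigr)^2$ and accounts for both the quadratic dependence on $(1-\epsilon)/(1+\epsilon)$ and the ratio $R_\ast/\diam(\mathcal{M})$ (respectively $\tau/\diam(\mathcal{M})$) that appears in the theorem. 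The main obstacle is the local volume estimate in Step~1: although standard in reach theory, its proof requires careful use of the reach hypothesis to relate chordal balls on $\mathcal{M}$ to Euclidean balls of the same radius; the remainder of the argument — extracting an $s$-net, applying the Euclidean packing bound to the image, and optimizing $s$ — is routine.
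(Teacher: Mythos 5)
Your proposal is correct in outline but takes a genuinely different route from the paper once the covering-number estimate is in hand. The first step is essentially shared: you lower-bound the number of $s$-separated points on $\mathcal{M}$ via a local volume estimate $\mathrm{vol}_d(\mathcal{M}\cap B_s(\mathbf{p})) \lesssim \omega_d s^d$ valid for $s\lesssim \tau$; the paper proves exactly this (with the explicit constant $8^d$, not a dimension-free $C$ — note your ``$C\omega_d s^d$'' really needs to be ``$\omega_d(Cs)^d$'') via Lemma~\ref{lem:GeovsEuclid} and Bishop's volume comparison, whereas you sketch it via the second-fundamental-form/graph-over-tangent-plane argument. Both are standard and fine. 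The real divergence is in converting ``$\mathcal{M}$ has many separated points'' into a lower bound on $m$: the paper routes this through Gaussian widths (Lemma~\ref{gaussianComparison}, the Sudakov--Fernique comparison, and Sudakov minoration, packaged as Theorem~\ref{FittingItInTheBall}), which directly delivers $m\gtrsim \left(\tfrac{1-\epsilon}{1+\epsilon}\right)^2\delta^2\log N(\mathcal{M},\delta)/\diam^2(\mathcal{M})$ and which the paper also reuses verbatim for the JL and RIP examples. You instead use the elementary volumetric packing bound in $\mathbb{R}^m$, which gives the sharper inequality $m\log\bigl(1+\tfrac{(1+\epsilon)\diam}{(1-\epsilon)s}\bigr)\geq \log N$, and then \emph{deliberately} weaken it with $\log(1+y)\leq y^2$ to land on the theorem's stated form; your route is more elementary and in fact would give a better bound ($1/\log$ in $\diam/s$ rather than $(s/\diam)^2$) if not weakened. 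Two small things to tighten: (i) the inequality $\log(1+y)\leq y^2$ requires $y\gtrsim 1$, which you should justify (it holds automatically once the separated set has at least two points, i.e.\ $(1-\epsilon)s\leq(1+\epsilon)\diam(\mathcal{M})$, but that in turn needs $s$ to be chosen small enough that $N\geq 2$); and (ii) in Case~1 you must verify $s=c_0 R_*\leq c\tau$ using the hypothesis $R_*\leq 4\sqrt{e}\,\tau$, and in Case~2 you must pick the constant multiple of $\tau$ small enough that the numerator $d\log(R_*/s)-d\log 8$ remains a positive multiple of $d\log(R_*/(4\tau))$. These are bookkeeping points, not structural flaws.
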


To help illustrate the use of Theorem~\ref{MAINTHM} we will now apply it to the standard examples of the $d$-dimensional sphere of radius $r$, $r S^d$ and $\ell_2$-ball $r B^d_2$ as submanifolds of $\mathbb{R}^n$.  In the process we will see that it gives near-optimal (up to constants) lower bounds on $m$ for these simple examples when $\epsilon$ is small.

\begin{example}[The Low-Distortion Embedding Dimension of Euclidean ball of radius $r$, $r B^d_2$]
\label{example:ball}
The pertinent geometric parameters of $r B^d_2 \subset \mathbb{R}^n$ as a submanifold of $\mathbb{R}^n$ are as follows:  The volume of $r B^d_2$ is $r^d \omega_{d}$, its reach is $\tau(r B^d_2) = \infty$, and its diameter is $\mathrm{diam}(r B^d_2) = 2r$.  Applying  Theorem~\ref{MAINTHM} to $\mathcal{M} = r B^d_2$ (i.e., a ball of radius $r$ in an $d$-dimensional subspace of $\mathbb{R}^n$) we therefore learn that 
$$m \geq C_1 \left( \frac{1-\epsilon}{1+\epsilon} \right)^2 \frac{d}{(2r)^2} \left(\frac{r^d \omega_{d}}{\omega_d} \right)^{\frac{2}{d}} ~=~ \frac{C_1}{4} \left( \frac{1-\epsilon}{1+\epsilon} \right)^2 d.$$
This result agrees with our intuition based on the fact that for a no distortion $\epsilon = 0$ embedding of $r B^d$, a target embedding dimension of $d$ is both necessary and sufficient for all choices of the radius $r$. 
\end{example}

\begin{example}[The Low-Distortion Embedding Dimension of a Sphere $r S^d$]
\label{example:sphere}
The pertinent geometric parameters of $r S^d \subset \mathbb{R}^n$ as a submanifold of $\mathbb{R}^n$ are as follows:  The volume of $r S^d$ is $2 r^d \frac{\pi^{\frac{d+1}{2}}}{\Gamma(\frac{d+1}{2})}$, its reach is $\tau(r S^d) = r$, and its diameter is $\mathrm{diam}({r S^d}) = 2r$.  Check the relation between reach and the other parameters to find that 
$$\frac{1}{4\sqrt{e}} \left(\frac{V}{\omega_d}\right) ^{\frac{1}{d}} =
\frac{r}{4\sqrt{e}}\left(2\sqrt{\pi}\frac{\Gamma(\frac{d}{2}+1)}{\Gamma(\frac{d+1}{2})}\right) ^{\frac{1}{d}} $$
Using that $\displaystyle \lim_{d\rightarrow \infty} \left(2\sqrt{\pi}\frac{\Gamma(\frac{d}{2}+1)}{\Gamma(\frac{d+1}{2})}\right) ^{\frac{1}{d}} = 1$, for large enough $d$ we have $\frac{1}{4\sqrt{e}} \left(\frac{V}{\omega_d} \right) ^{\frac{1}{d}} < r = \tau$.  Applying Theorem~\ref{MAINTHM} we now learn that 
$$m \geq \frac{C_1}{4} \left( \frac{1-\epsilon}{1+\epsilon} \right)^2 d \left( 2 \sqrt{\pi} \frac{\Gamma(\frac{d}{2}+1)}{\Gamma(\frac{d+1}{2})} \right)^{\frac{2}{d}}$$
holds for all $r$ and sufficiently large $d$.  This again agrees which our intuition that to embed $r S^d$ without distortion requires at least $d$ dimensions. 
\end{example}

The attentive reader may have noticed that both Examples~\ref{example:ball} and~\ref{example:sphere} end up utilizing the first $\frac{1}{4\sqrt{e}}(\frac{V}{\omega_d}) ^{\frac{1}{d}} \leq \tau$ case of Theorem~\ref{MAINTHM}.  Somewhat crucially, the following discussion will largly hinge on the second case.

\subsection{A Discussion of Theorems~\ref{Thm:Wakin} and~\ref{MAINTHM}}

Considering the second case of Theorem~\ref{MAINTHM} with $\tau \leq \frac{1}{4\sqrt{e}}(\frac{V}{\omega_d}) ^{\frac{1}{d}}$ we note that the ratio $\frac{V}{\tau^d}$ must be at least $\omega_d (4 \sqrt{e})^d = \frac{(4 \sqrt{e \pi})^d}{\Gamma \left(1 + d/2 \right)}$, where $\Gamma$ denotes the gamma function. 
Recalling Stirling's approximation one can show that $\left( \frac{d}{2e} \right)^{d/2 } \leq \Gamma \left( 1+d/2 \right)$.  As a result, we see, e.g., that assuming $\tau \leq \frac{1}{4\sqrt{e}}((\frac{d}{2e\pi})^{d/2}V)^{1/d} \leq \frac{1}{4\sqrt{e}} \left( \frac{\Gamma \left( 1+d/2 \right)}{\pi^{d/2}} V \right)^{1/d} = \frac{1}{4\sqrt{e}}(\frac{V}{\omega_d}) ^{\frac{1}{d}}$, 
or equivalently that $\left( \frac{32 e^2 \pi}{d} \right)^{d/2} =  \frac{(4\sqrt{e})^d}{(\frac{d}{2e\pi})^{d/2}} \leq \frac{V}{\tau^d}$,
implies the inequality $\tau \leq \frac{1}{4\sqrt{e}}(\frac{V}{\omega_d}) ^{\frac{1}{d}}$ in Theorem~\ref{MAINTHM}. 
We further note that this range of $\tau$ is also compatible with assumption \eqref{equ:WakinAssumption} in Theorem~\ref{Thm:Wakin} that $\frac{V}{\tau^d} \geq (\frac{21}{2 \sqrt{d}})^d$.\\  

Assuming then for the sake of comparison that $\frac{V}{\tau^d} \geq \max \left\{ (\frac{21}{2 \sqrt{d}})^d, \omega_d (4 \sqrt{e})^d \right\}$ and setting $\epsilon = \rho = 1/3$ (for example) in Theorem~\ref{Thm:Wakin} one can see that the minimum achievable, e.g., $\epsilon = 1/3$ embedding dimension $m$ of $\mathcal{M}$ must satisfy
$$\frac{C'_2 d \tau^2}{\mathrm{diam}^2(M)} \log \left(\frac{V^{\frac{1}{d}}}{(\omega_d)^{\frac{1}{d}} (4\tau)}\right) \underset{Thm~\ref{MAINTHM}}{\leq} m  \underset{Thm~\ref{Thm:Wakin}}{\leq} \tilde{C} d \left(1 + \log \left(\frac{d}{\tau^2} V^{\frac{2}{d}}\right) \right)$$
for all $d \geq 1$.  Simplifying the expression further under the assumption that, e.g.,  $\frac{V}{\tau^d} = (\frac{C'}{\sqrt{d}})^d$ we have that 
$$\frac{C'_2 \tau^2}{\mathrm{diam}^2(M)} \log \left(\frac{C'}{4 \sqrt{d} \sqrt[d]{\omega_d}}\right) \leq \frac{m}{d}  \leq \tilde{C} \left(1 + \log \left(2 C'^2\right) \right)$$
In this case we can see that under the assumptions above that the upper and lower bounds on $m$ differ by a factor proportional to $\frac{ \tau^2}{\mathrm{diam}^2(M)}$ for $d$ not too large (e.g., $d < 100$).\\

Taken all together we can see that Theorem~\ref{MAINTHM} supports  Theorem~\ref{Thm:Wakin} as being optimal up to (at worst) a factor proportional to $\frac{ \tau^2}{\mathrm{diam}^2(M)}$ for $d$ not too large, and $\frac{V}{\tau^d} = (\frac{C'}{\sqrt{d}})^d$ for appropriate ranges of $C' \in \mathbb{R}^+$.  This is perhaps most interesting due to the fact that Theorem~~\ref{MAINTHM} implies that it is not possible to improve Theorem~\ref{Thm:Wakin} beyond this factor in this regime, even by utilizing {\it nonlinear} functions $f$ in the place of the matrix $\Phi$.  Indeed, a similar fact is established as a consequence of the proof of Theorem~\ref{MAINTHM} for matrices with the Restricted Isometry Property (RIP) (see Example~\ref{RIPexample} below), and an existing result of this kind is also reproduced for JL embeddings of finite sets (see Example~\ref{example:JLlowerbound}).

\subsection{An Outline of the Proof of Theorem~\ref{MAINTHM}}

The remainder of the paper is organized as follows.  In Section~\ref{sec:GenProbBound} we prove Theorem~\ref{FittingItInTheBall}, a general method for bounding the low-distortion embedding dimension of an arbitrary subset $T \subset \mathbb{R}^n$ in terms of its covering numbers.  Given this result the proof of Theorem~\ref{MAINTHM} is then reduced to lower bounding the covering numbers of submanifolds of $\mathbb{R}^n$ as subsets of $\mathbb{R}^n$.  This is done in Section~\ref{sec:CoverNumbound} and results in Theorem~\ref{thm:ManifoldCoverLowerBound}.  Finally, Theorem~\ref{MAINTHM} is then proven in Section~\ref{sec:ProofofMainThm}.

\section{General Lower Bounds for Embedding Dimension via High Dimensional Probability}
\label{sec:GenProbBound}

In this section we present a relatively simple lower bound for the achievable low-distortion Euclidean embedding dimension, $m$, of an arbitrary bounded subset $T \subset \mathbb{R}^n$ into $\mathbb{R}^m$ by any function.  As we shall see, when coupled with lower bounds for the covering numbers of $T$, Theorem~\ref{FittingItInTheBall} below allows one to quickly reproduce more specialized results concerning the necessary low-distortion embedding dimension of finite point sets \cite{alon2003problems,larsen_et_al:LIPIcs:2016:6203,larsen2017optimality} in certain parameter regimes.  Similarly, the result can also be used to quickly lower bound the number of rows  that any matrix with the RIP must have, reproducing existing lower bounds of interest in compressive sensing (see, e.g., \cite[Corollary 10.8]{foucart_mathematical_2013}).  In this second example one can further learn that existing RIP matrices are optimal in the sense that they match the necessary embedding dimension of even nonlinear embeddings of all $s$-sparse vectors in $\mathbb{R}^n$ (up to constants in certain parameter regimes).\\

To prove this general lower bound result we need the notion of the {\it Gaussian Width} of a set $T \subset \mathbb{R}^n$.

\begin{definition}\cite[Definition 7.5.1]{vershynin_high-dimensional_2018}
The Gaussian Width of a set $T \subset \mathbb{R}^n$ is  \begin{align*}
w(T) := \mathbb{E} \sup_{{\bf x} \in T} \, \langle {\bf g},{\bf x} \rangle
\end{align*}
where ${\bf g}$ is a random vector with $n$ independent and identically distributed (i.i.d.) mean $0$ and variance $1$ Gaussian entries.  
\end{definition}

The following lemma bounds the Gaussian width of the image of a low-distortion linear map of $T \subset \mathbb{R}^n$ in terms of the Gaussian width of $T$.  It is largely a consequence of the Sudakov-Fernique comparison theorem for mean 0 Gaussian processes (see, e.g., \cite[Theorem 7.2.11]{vershynin_high-dimensional_2018} for the statement used here).

\begin{lemma} \label{gaussianComparison}
Let $a,b \in \mathbb{R}^+$, $T \subset \mathbb{R}^n$, and $f:T \rightarrow \mathbb{R}^m$ be a function such that $a \|{\bf x}- {\bf y}\|_2 \leq \|f({\bf x})-f({\bf y})\|_2 \leq b \|{\bf x}- {\bf y}\|_2$
holds for all ${\bf x}, {\bf y} \in T$. Then, $a \, w(T) \leq w \left(f(T) \right) \leq b \, w(T)$ also holds.
\end{lemma}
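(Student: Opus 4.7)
The plan is to exhibit two mean-zero Gaussian processes indexed by the same set $T$ whose increment variances are controlled by $\|{\bf x}-{\bf y}\|_2^2$ and $\|f({\bf x})-f({\bf y})\|_2^2$ respectively, and then to invoke the Sudakov-Fernique inequality twice. The key observation that unlocks the argument is that although the two Gaussian widths $w(T)$ and $w(f(T))$ are a priori expectations of suprema over \emph{different} index sets (living in $\mathbb{R}^n$ and $\mathbb{R}^m$), both can be re-expressed as suprema indexed by $T$: for a standard Gaussian ${\bf h} \in \mathbb{R}^m$,
\begin{equation*}
w(f(T)) \;=\; \mathbb{E}\,\sup_{{\bf y} \in f(T)} \langle {\bf h},{\bf y}\rangle \;=\; \mathbb{E}\,\sup_{{\bf x}\in T}\langle {\bf h},f({\bf x})\rangle.
\end{equation*}

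Concretely, I would fix independent standard Gaussian vectors ${\bf g}\in\mathbb{R}^n$ and ${\bf h}\in\mathbb{R}^m$, and define two Gaussian processes on $T$ by $X_{\bf x} := \langle {\bf g}, {\bf x}\rangle$ and $Y_{\bf x} := \langle {\bf h}, f({\bf x})\rangle$. Both are mean zero, and their increments satisfy
\begin{equation*}
\mathbb{E}\,(X_{\bf x}-X_{\bf y})^2 = \|{\bf x}-{\bf y}\|_2^2, \qquad \mathbb{E}\,(Y_{\bf x}-Y_{\bf y})^2 = \|f({\bf x})-f({\bf y})\|_2^2.
\end{equation*}
The bi-Lipschitz hypothesis then translates directly into a pointwise comparison of increment variances: $a^2\,\mathbb{E}(X_{\bf x}-X_{\bf y})^2 \leq \mathbb{E}(Y_{\bf x}-Y_{\bf y})^2 \leq b^2\,\mathbb{E}(X_{\bf x}-X_{\bf y})^2$ for all ${\bf x},{\bf y}\in T$. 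Equivalently, the processes $a X_{\bf x}$, $Y_{\bf x}$, and $b X_{\bf x}$ are nested in the Sudakov-Fernique sense.

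Applying the Sudakov-Fernique comparison theorem (in the form of \cite[Theorem 7.2.11]{vershynin_high-dimensional_2018}) to the pair $(aX_{\bf x}, Y_{\bf x})$ yields $a\,\mathbb{E}\sup_{{\bf x}\in T} X_{\bf x} \leq \mathbb{E}\sup_{{\bf x}\in T} Y_{\bf x}$, which is exactly $a\,w(T) \leq w(f(T))$. Applying it a second time to $(Y_{\bf x}, bX_{\bf x})$ gives $w(f(T)) \leq b\,w(T)$, completing the proof. There is no real obstacle beyond matching the setup of Sudakov-Fernique: the main care is simply to reinterpret $w(f(T))$ as a supremum over the index set $T$ so that the two processes live on a common index set, and to observe that a bi-Lipschitz estimate on distances is precisely a bi-Lipschitz estimate on the canonical $L^2$ pseudo-metrics of the two processes. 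If one wanted to be pedantic about measurability of the suprema, a standard reduction to countable dense subsets of $T$ (or to any separable version, since $T\subset\mathbb{R}^n$) would suffice.
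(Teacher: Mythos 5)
Your proof is correct and follows essentially the same route as the paper's: both define the Gaussian processes $\langle {\bf g},{\bf x}\rangle$ (scaled by $a$ and $b$) and $\langle {\bf h}, f({\bf x})\rangle$ on the common index set $T$, compare increment variances via the bi-Lipschitz hypothesis, and conclude with the Sudakov--Fernique theorem. The only cosmetic difference is that the paper bakes the scalars $a,b$ directly into the definitions of the comparison processes rather than stating them as multiplicative factors.
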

\begin{proof}  Define the mean 0 Gaussian processes via $X_{\bf x} := a \la {\bf g}, {\bf x} \ra$, $Y_{\bf x} := \la {\bf g'}, f({\bf x}) \ra$, and $Z_{\bf x} = b \, \la {\bf g}, {\bf x} \ra$ for all ${\bf x} \in T$, where ${\bf g}, {\bf g'}$ are Gaussian random vectors with $n$ and $m$ i.i.d. mean $0$ and variance $1$ Gaussian entries, respectively.  For each ${\bf x}, {\bf y} \in T$, we then have that their increments satisfy

\begin{align*}
&\mathbb{E}\left(X_{\bf x}-X_{\bf y} \right)^2 = a^2 \mathbb{E} \la {\bf g}, {\bf x} - {\bf y} \ra^2 = a^2 \|{\bf x} - {\bf y}\|^2_2 \leq \\
&\mathbb{E}(Y_{\bf x}-Y_{\bf y})^2 = \mathbb{E} \la {\bf g'}, f({\bf x}) - f({\bf y}) \ra^2 = \|f({\bf x}) - f({\bf y})\|^2_2 \leq  \\
&\mathbb{E}(Z_{\bf x}-Z_{\bf y})^2 = b^2 \mathbb{E} \la {\bf g}, {\bf x} - {\bf y} \ra^2 = b^2 \|{\bf x} - {\bf y}\|^2_2. 
\end{align*}
The stated result now follows from the Sudakov-Fernique comparison theorem together with the fact that, e.g.,  $\mathbb{E}\sup_{{\bf x} \in T} X_{\bf x} = a \, \mathbb{E}\sup_{{\bf x} \in T} \la {\bf g}, {\bf x} \ra = a \,w(T)$.
\end{proof}

With Lemma~\ref{gaussianComparison} in hand we are now able to prove the main theorem of this section.  The proof also utilizes basic properties of Gaussian widths \cite[Proposition 7.5.2]{vershynin_high-dimensional_2018}, the Gaussian width of the open Euclidean unit ball in $m$ dimensions, $B^m_2 \subset \mathbb{R}^m$, being $w(B^m_2) \leq C' \sqrt{m}$ for an absolute constant $C' \in \mathbb{R}^+$ \cite[Example 7.5.7]{vershynin_high-dimensional_2018}, and Sudakov’s minoration inequality \cite[Theorem 7.4.1]{vershynin_high-dimensional_2018}.

\begin{theorem} \label{FittingItInTheBall} 
Let $a,b, \delta \in \mathbb{R}^+$, $T \subset \mathbb{R}^n$, and $f:T \rightarrow \mathbb{R}^m$ be a function such that $a \|{\bf x}- {\bf y}\|_2 \leq \|f({\bf x})-f({\bf y})\|_2 \leq b \|{\bf x}- {\bf y}\|_2$
holds for all ${\bf x}, {\bf y} \in T$. Then, 
\begin{align*}
     \frac{C a^2 \delta^2}{b^2} \, \frac{\log N(T,\delta)}{\mathrm{diam}^2(T)}
     \leq \left(\frac{2 a}{C' b} \frac{w(T)}{\mathrm{diam}(T)}\right)^2 \leq m
\end{align*}
holds, where $\mathrm{diam}(T) := \sup_{{\bf x}, {\bf y} \in T} \| {\bf x}- {\bf y} \|_2$, and where $N(T,\delta)$ is the $\delta$-covering number of $T$ by Euclidean balls of radius $\delta$ centered on $T$.  Here $C, C' > 0$ are universal constants that are independent of all other quantities.
\end{theorem}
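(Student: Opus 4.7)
The plan is to prove the two inequalities separately. The right inequality, $\left(\tfrac{2a}{C'b}\tfrac{w(T)}{\mathrm{diam}(T)}\right)^2 \le m$, is the heart of the argument and follows by sandwiching $w(f(T))$ between a lower bound coming from Lemma~\ref{gaussianComparison} and an upper bound coming from the fact that $f(T)$ fits inside a small Euclidean ball in $\mathbb{R}^m$. The left inequality is then essentially Sudakov's minoration applied to $T$.

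For the right inequality, Lemma~\ref{gaussianComparison} gives $a\,w(T) \le w(f(T))$ immediately. To bound $w(f(T))$ from above, I note that $\mathrm{diam}(f(T)) \le b\,\mathrm{diam}(T)$ by the upper bi-Lipschitz bound. Using the translation-invariance of Gaussian width (one of the basic properties in \cite[Proposition 7.5.2]{vershynin_high-dimensional_2018}), I may recenter $f(T)$ at any of its points, after which $f(T)$ is contained in the Euclidean ball of radius $b\,\mathrm{diam}(T)$. A slicker bound uses the identity $w(S) = \tfrac12 w(S-S)$ (also in \cite[Proposition 7.5.2]{vershynin_high-dimensional_2018}), which, applied to $S = f(T)$, together with $f(T)-f(T) \subset b\,\mathrm{diam}(T)\cdot B_2^m$ and $w(B_2^m) \le C'\sqrt{m}$, yields
\begin{align*}
w(f(T)) \;=\; \tfrac12\, w(f(T)-f(T)) \;\le\; \tfrac{b\,\mathrm{diam}(T)}{2}\, w(B_2^m) \;\le\; \tfrac{C' b\,\mathrm{diam}(T)\sqrt{m}}{2}.
\end{align*}
Chaining the two bounds gives $a\,w(T) \le \tfrac{C'b\,\mathrm{diam}(T)\sqrt{m}}{2}$, which rearranges to the right-hand inequality. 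The factor of $2$ in the theorem statement is precisely the factor produced by the $w(S-S) = 2w(S)$ identity, so this is where one must be careful.

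For the left inequality, Sudakov's minoration inequality \cite[Theorem 7.4.1]{vershynin_high-dimensional_2018} provides an absolute constant $c>0$ with $w(T) \ge c\,\delta\sqrt{\log N(T,\delta)}$ for every $\delta > 0$. Substituting this into the middle term of the theorem and absorbing the constants $c$, $C'$, and the factor of $4$ into a single universal constant $C$ produces the claimed bound
\begin{align*}
\frac{C a^2 \delta^2}{b^2}\,\frac{\log N(T,\delta)}{\mathrm{diam}^2(T)} \;\le\; \left(\frac{2a}{C'b}\frac{w(T)}{\mathrm{diam}(T)}\right)^2.
\end{align*}
No step here is truly hard; the only subtlety I anticipate is choosing the correct variant of the translation/symmetrization identity so that the factor of $2$ is tight, and verifying that the Sudakov constant can be absorbed cleanly into the universal constant $C$ without affecting the shape of the inequality.
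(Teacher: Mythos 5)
Your proof is correct and follows essentially the same route as the paper's: lower bound $w(f(T))$ by $a\,w(T)$ via Lemma~\ref{gaussianComparison}, upper bound it via the symmetrization identity $w(f(T)) = \tfrac12 w(f(T)-f(T))$ together with $f(T)-f(T) \subset b\,\mathrm{diam}(T)\cdot B_2^m$ and $w(B_2^m)\le C'\sqrt{m}$, then apply Sudakov's minoration for the leftmost inequality. The only cosmetic difference is that you initially float a recentering variant before settling on the symmetrization argument, which is exactly the one the paper uses.
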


\begin{proof}
Consider the set $f(T)-f(T) := \{ f({\bf x}) - f({\bf y}) ~|~ {\bf x}, {\bf y} \in T\}$.  By Lemma~\ref{gaussianComparison} and the properties of Gaussian widths, $a \, w(T) \, \leq w\left(f(T)\right) = \frac{1}{2} w\left(f(T) - f(T)\right)$.  Furthermore, $f(T) - f(T)$ is contained in a ball in $\mathbb{R}^m$ with radius $b \diam(T)$. Since $w(B^{m}_2) \leq C' \sqrt{m}$,  monotonicity of the Gaussian width and scaling properties now imply that 
$w(f(T)-f(T)) \leq C' \, b \, \mathrm{diam}(T) \sqrt{m}$. Therefore,
\begin{align*}
 a \, w(T) \leq \frac{1}{2}w\left(f(T) - f(T)\right) \leq \frac{C' \, b}{2} \, \diam(T) \, \sqrt{m}.
\end{align*}
The rightmost inequality follows.  The leftmost inequality is now a consequence of Sudakov’s minoration inequality.
\end{proof}

We will now use Theorem~\ref{FittingItInTheBall} to reproduce the promised lower bounds concerning RIP matrices and the low-distortion embedding dimension of finite point sets.  

\begin{example}[The Necessary Low-Distortion Embedding Dimension of Finite Sets]  
\label{example:JLlowerbound}
Let $\epsilon \in (0,1)$, and $T = \{ {\bf x}_1, \dots {\bf x}_N \} \subset \mathbb{R}^n$ be a finite set of points whose minimal distance between any two elements is $\delta := \min_{j \neq k} \| {\bf x}_j - {\bf x}_k \|_2$.  Note that the $\delta/2$-covering number of $T$ is $N(T, \delta/2) = N$.  As a result, Theorem~\ref{FittingItInTheBall} implies that any function $f:\mathbb{R}^n \rightarrow \mathbb{R}^m$ satisfying $\sqrt{1 - \epsilon} \|{\bf x}_j- {\bf x}_k\|_2 \leq \|f({\bf x}_j)-f({\bf x}_k)\|_2 \leq \sqrt{1 + \epsilon} \|{\bf x}_j- {\bf x}_k\|_2$
for all ${\bf x}_j, {\bf x}_k \in T$ must have 
$m \geq C'' \frac{(1-\epsilon) \delta^2}{(1+\epsilon)} \frac{\log N}{{\rm diam}^2(T)}$ for an absolute constant $C'' \in \mathbb{R}^+$. \\ 

Now consider $\epsilon$ not too small (e.g., $\epsilon > 0.01$). Furthermore, let $T$ to be a subset of, e.g., a $0.01$-packing of the closed unit ball $\overline{B_2^n} \subset \mathbb{R}^n$ so that $\delta > 0.01$ and $1.98 \leq {\rm diam}(T) \leq 2$ both hold.  For this range of $\epsilon$ and such $T$ coming from a packing of the unit ball one can see that the lower bound on $m$ from Theorem~\ref{FittingItInTheBall} matches (up to constants) the most recent worst case bounds of the form $m = \Omega \left( \log(N) / \epsilon^2 \right)$ proven by Larson and Nelson in \cite{larsen2017optimality} for finite point sets.  Furthermore, this example application of Theorem~\ref{FittingItInTheBall} helps to cement our intuition that (subsets of) packings of the unit ball are among the most difficult finite sets to embed into a lower dimensional Euclidean space.
\end{example}

\begin{example}[The Number of Rows Required by an RIP matrix, and the Non-Existence of Better Nonlinear Low-Distortion Embeddings for Sparse Vectors] 
\label{RIPexample}

Let $S \subseteq [n] := \{ 1, \dots, n \} \subset \mathbb{N}$, and denote the $j^{\rm th}$-column of the $n \times n$ identity matrix by ${\bf e}_j$ for all $j \in [n]$. Next, define $C_S := {\rm span} \{ {\bf e}_j ~|~ j \in S \}$ and let $\displaystyle U_s := \left( \bigcup_{S \subset [n],~|S| \leq s} C_S \right) \bigcap ~\overline{B_2^n} \subset \mathbb{R}^n$ be the set of all $s$-sparse vectors with $\ell_2$-norm at most one.  A matrix $A \in \mathbb{R}^{m' \times n}$ will have the RIP of order $(s,\epsilon)$ for $s \in [n]$ even and $\epsilon \in (0,1)$ if and only if $\sqrt{1-\epsilon} \| {\bf x} - {\bf y} \|_2 \leq \| A {\bf x} - A {\bf y}\|_2 \leq \sqrt{1+\epsilon} \| {\bf x} - {\bf y}\|_2$ holds for all ${\bf x},{\bf y} \in U_{s/2}$.  As in the previous example we will consider $\epsilon$ not too small (e.g., $\epsilon > 0.01$). In this regime it is known that a matrix $A \in \mathbb{R}^{m' \times n}$ with both the RIP of order $(s,\epsilon)$ and as few rows $m'$ as possible will/must have $m' = \Theta \left (s \log (N/s) \right)$.  See, e.g., \cite[Theorem 9.2]{foucart_mathematical_2013} and \cite[Corollary 10.8]{foucart_mathematical_2013} for specific supporting theorems.\\

One natural question concerns whether allowing the matrix $A \in \mathbb{R}^{m' \times n}$ to be replaced by an arbitrary nonlinear function $f:\mathbb{R}^n \rightarrow \mathbb{R}^m$ might permit $m \ll m' = \Theta \left (s \log (N/s) \right)$ to hold while still approximately preserving the $\ell_2$-norms of all $s$-sparse vectors up to the same $\sqrt{1 \pm \epsilon}$-distortion factors.  Toward answering this question one can apply Theorem~\ref{FittingItInTheBall} to see that any function $f:\mathbb{R}^n \rightarrow \mathbb{R}^m$ satisfying $\sqrt{1 - \epsilon} \|{\bf x} - {\bf y}\|_2 \leq \|f({\bf x})-f({\bf y})\|_2 \leq \sqrt{1 + \epsilon} \|{\bf x}- {\bf y}\|_2$
for all ${\bf x}, {\bf y} \in U_{s/2}$ must have 
$m \geq C \frac{(1-\epsilon) \delta^2}{(1+\epsilon)} \frac{\log N(U_{s/2},\delta)}{{\rm diam}^2(U_{s/2})} = C \frac{(1-\epsilon) \delta^2}{4 (1+\epsilon)} \log N(U_{s/2},\delta)$ for our choice of $\delta \in \mathbb{R}^+$.  In order to make a good choice for $\delta$ in this bound we will use the following lemma.

\begin{lemma}[Lemma 10.12 in \cite{foucart_mathematical_2013}]
\label{lem:subsetlemma}
Given integers $s < n$, there exist $N \geq \left( \frac{n}{2s} \right)^{s/4}$ subsets $S_1, \dots, S_N$ of $[n]$ such that $|S_j| = s/2$ and $|S_j \cap S_k| < \frac{s}{4}$ whenever $j \neq k$.
\end{lemma}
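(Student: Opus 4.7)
The plan is a greedy (Gilbert--Varshamov--style) packing argument on the ground collection
$$\mathcal{S} := \{S \subseteq [n] : |S| = s/2\}, \qquad |\mathcal{S}| = \binom{n}{s/2}.$$
I would pick any $S_1 \in \mathcal{S}$, delete from $\mathcal{S}$ every $T$ with $|T \cap S_1| \geq s/4$, and iterate: at step $j+1$, select an arbitrary surviving $S_{j+1}$, delete every $T$ with $|T \cap S_{j+1}| \geq s/4$, and continue until $\mathcal{S}$ is exhausted.  By construction the resulting family $\{S_1, \dots, S_N\}$ is $s/2$-uniform and satisfies $|S_j \cap S_k| < s/4$ whenever $j \neq k$, so only the lower bound on $N$ remains.

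For this, I would upper bound the number of $T \in \mathcal{S}$ deleted at each step by fixing a single $S$ and counting the $T$'s with $|T \cap S| \geq s/4$.  Splitting on the exact intersection size $k$ gives
$$B := \sum_{k=s/4}^{s/2}\binom{s/2}{k}\binom{n-s/2}{s/2-k},$$
and since each iteration kills at most $B$ elements of $\mathcal{S}$, the greedy process must run for at least $N \geq \binom{n}{s/2}/B$ steps.

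The remainder is a routine but delicate binomial estimate.  I would first observe that on the range $k \in [s/4, s/2]$, and under the mild assumption $n \geq s$ that is forced by nontriviality of the claim, both $\binom{s/2}{k}$ and $\binom{n-s/2}{s/2-k}$ attain their maximum at $k = s/4$, yielding the clean bound
$$B \leq (s/4+1)\,\binom{s/2}{s/4}\,\binom{n-s/2}{s/4}.$$
Substituting into $N \geq \binom{n}{s/2}/B$, applying the identity $\binom{n}{s/2}\binom{s/2}{s/4} = \binom{n}{s/4}\binom{n-s/4}{s/4}$, and combining with the elementary estimates $\binom{m}{k} \geq (m/k)^{k}$, $\binom{m}{k} \leq (em/k)^{k}$, together with a Stirling control on the central binomial $\binom{s/2}{s/4}$, the various $2^{\Theta(s)}$ factors should collapse into the advertised base $(n/(2s))^{s/4}$.

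The main obstacle is exactly this last bookkeeping step.  The greedy skeleton immediately produces a lower bound of the form $(c\,n/s)^{s/4}$ for some absolute $c > 0$, but extracting the specific base constant $1/2$ inside $(n/(2s))^{s/4}$ rests on a careful (rather than crude) handling of $\binom{s/2}{s/4}$ and of the polynomial prefactor $(s/4+1)$.  A quick sanity check in the regime $n = 4s$, where the claim asks for only $N \geq 2^{s/4}$, already confirms that there is adequate slack in the greedy bound, so no fundamentally different idea should be required beyond careful binomial estimation.
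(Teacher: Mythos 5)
The paper itself offers no proof of this lemma; it is imported verbatim as Lemma~10.12 of Foucart--Rauhut, so there is no internal argument to compare against. Your greedy (Gilbert--Varshamov) packing skeleton is, however, precisely the strategy used in that reference: iteratively pick an $s/2$-subset, delete all $T$ with $|T\cap S_j|\ge s/4$, and lower bound the number of surviving rounds by $\binom{n}{s/2}/B$ with $B=\sum_{k=s/4}^{s/2}\binom{s/2}{k}\binom{n-s/2}{s/2-k}$. The monotonicity claims used to get $B\le(s/4+1)\binom{s/2}{s/4}\binom{n-s/2}{s/4}$ are correct given $s<n$.

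The genuine gap is exactly the step you wave at. Plugging in the blunt estimates you list ($\binom{m}{k}\ge(m/k)^k$, $\binom{m}{k}\le(em/k)^k$, $\binom{s/2}{s/4}\le 2^{s/2}$, and simply discarding the $(s/4+1)$ factor) does \emph{not} land on base $n/(2s)$; it lands on something like $n/(8s)$ or $n/(16s)$. Recovering the stated base requires the sharp central-binomial bound $\binom{s/2}{s/4}\le 4^{s/4}/\sqrt{\pi s/4}$ together with a two-sided Stirling estimate on $(s/4)!$, so that the $\sqrt{s}$ saved in the denominator can absorb both the $(s/4+1)$ prefactor and the lost additive terms in $n$ (e.g.\ $n-s/2$ versus $n$), and one still needs a separate hand-check for small $s$ where Stirling is too loose but the claim $(n/(2s))^{s/4}\le N$ is essentially trivial. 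Your sanity check at $n=4s$ shows there is slack, but it does not establish that the slack survives uniformly down to the boundary $n\approx 2s$, which is where the inequality is tight. So: right method, correct combinatorial setup, but the bookkeeping you defer is not merely ``routine'' --- it is the entire content of the specific constant in the statement, and as written the proof is incomplete.
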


We can now use Lemma~\ref{lem:subsetlemma} to define ${\bf x}_1, \dots, {\bf x}_N \in U_{s/2}$ by letting ${\bf x}_j$ be $\sqrt{2/s}$ for each index in $S_j$, and $0$ elsewhere.  Note that $\|{\bf x}_j - {\bf x}_k \|_2 > 1$ will hold whenever $j \neq k$.  As a consequence of the existence of these $N$ points we can see that, e.g., setting $\delta < 1/2$ forces $N(U_{s/2},\delta) \geq N \geq \left( \frac{n}{2s} \right)^{s/4}$.  Hence, choosing any $\epsilon > 0.01$ and, e.g., $\delta = 1/3$ in the bound on $m$ from Theorem~\ref{FittingItInTheBall} above will yield the bound $m \geq C''' s \log (n/2s)$ for a universal constant $C''' \in \mathbb{R}^+$. As a consequence, we can see that linear measurements achieve the RIP with a near optimal level of compression even when compared to arbitrary nonlinear measurement functions, at least for larger values of $\epsilon$.  Allowing $f$ to be nonlinear doesn't help here.
\end{example}

Having digressed with the two previous examples, we hope to have established a clear proof strategy for the manifold case.  If we can lower bound the covering numbers of smooth submanifolds of $\mathbb{R}^n$, then we can use those lower bounds together with Theorem~\ref{FittingItInTheBall} to establish lower bounds of the achievable low-distortion embedding dimension $m$ of any given submanifold $\mathcal{M}$ of $\mathbb{R}^n$ into $\mathbb{R}^m$.  In persuance of that strategy we will develop lower bounds for the covering numbers of submanifolds of $\mathbb{R}^n$ in the next section.

\section{Lower Bounds for the Covering Numbers of Smooth Submanifolds of Euclidean Space}
\label{sec:CoverNumbound}

Our lower covering number bounds for a smooth submanifold $\mathcal{M} \subset \mathbb{R}^n$ will in terms of its dimension $d$, $d$-dimensional volume $V$, and reach $\tau$.  

\begin{definition}[{\bf Reach:} Definition 4.1 in \cite{federer_curvature_1959}]For a subset of Euclidean space $T \subset \mathbb{R}^n$, the reach $\tau$ is defined as 
$$
    \tau(T) = \sup \{ t \geq 0  \big| \, \forall {\bf x} \in \mathbb{R}^n \text{ with } d({\bf x},T) < t, \, {\bf x} \text{ has a unique closest point in } T \}.
$$
\label{def:Reach}
\end{definition}

The following lemma concerns the control of the sectional curvatures of $\mathcal{M}$ by $\tau$, and also demonstrates how $\tau$ can be used to help relate geodesic distances between points on $\mathcal{M}$ to their Euclidean distances.

\begin{lemma}
\label{lem:GeovsEuclid}
Let $\mathcal{M}$ be a compact, smooth submanifold of $\mathbb{R}^n$ possibly with boundary. Furthermore, let $\tau$ be the reach of $\mathcal{M}$, ${\bf p}, {\bf q} \in \mathcal{M}$, ${\bf x} \in \mathbb{R}^n$, and $d$ and $l$ be the Euclidean and geodesic distances between ${\bf p}$ and ${\bf q}$, respectively. Then:
\begin{enumerate}
\item In the interior of $\mathcal{M}$, any sectional curvature $k$ satisfies $\frac{-2}{\tau^2} \leq k \leq \frac{1}{\tau^2}$. 
\item $l - \frac{l^2}{2\tau} \leq d.$ 
\item When restricted to $d \leq \frac{\tau}{2}$, we further have $l \leq d+\frac{2d^2}{\tau}.$
\end{enumerate}
\end{lemma}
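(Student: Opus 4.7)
The plan hinges on a single foundational pointwise estimate: at every interior point ${\bf p} \in \mathcal{M}$, the second fundamental form satisfies $\|II_{\bf p}({\bf v}, {\bf v})\| \leq 1/\tau$ for every unit tangent vector ${\bf v}$. I would establish this by the classical osculating-sphere argument. For any unit normal $\nu$ at ${\bf p}$, the open ball $B({\bf p} + \tau \nu, \tau)$ is disjoint from $\mathcal{M} \setminus \{{\bf p}\}$ by the definition of reach, so for any smooth curve $\gamma$ in $\mathcal{M}$ with $\gamma(0)={\bf p}$ and $\dot\gamma(0)={\bf v}$ the function $g(t) := \|\gamma(t) - ({\bf p}+\tau\nu)\|^2 - \tau^2$ is nonnegative and vanishes at $t=0$. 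The condition $g''(0) \geq 0$ (with $g'(0) = 0$ since $\langle \nu, {\bf v}\rangle = 0$) simplifies to $\langle \nu, \ddot\gamma(0)\rangle \leq 1/\tau$; since the normal part of $\ddot\gamma(0)$ equals $II({\bf v},{\bf v})$ and $\nu$ ranges over all unit normals (including $-\nu$), the claimed pointwise bound follows.

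For item 1 I would apply the Gauss equation $K(X,Y) = \langle II(X,X), II(Y,Y)\rangle - \|II(X,Y)\|^2$ to an orthonormal pair $X, Y$. The upper bound $K \leq 1/\tau^2$ is immediate from Cauchy--Schwarz on the first term together with the diagonal estimate. For the lower bound I additionally need $\|II(X,Y)\| \leq 1/\tau$, which I would obtain from the polarization identity $4\,II(X,Y) = II(X+Y,X+Y) - II(X-Y,X-Y)$: since $\|X\pm Y\| = \sqrt{2}$, each bracketed term equals $2\,II(\hat u,\hat u)$ for a unit tangent $\hat u$ and hence has norm at most $2/\tau$. Combining these pointwise bounds through the Gauss equation yields $K \geq -1/\tau^2 - 1/\tau^2 = -2/\tau^2$.

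For item 2, I would let $\gamma:[0,l] \to \mathcal{M}$ be a unit-speed minimizing geodesic from ${\bf p}$ to ${\bf q}$. Since $\ddot\gamma = II(\dot\gamma,\dot\gamma)$ satisfies $\|\ddot\gamma\| \leq 1/\tau$, integrating twice gives $\|\dot\gamma(t) - \dot\gamma(0)\| \leq t/\tau$ and then $\|\gamma(l) - {\bf p} - l\dot\gamma(0)\| \leq l^2/(2\tau)$. Projecting $\gamma(l) - {\bf p}$ onto the unit vector $\dot\gamma(0)$ yields $d \geq \langle \gamma(l) - {\bf p}, \dot\gamma(0)\rangle \geq l - l^2/(2\tau)$; the inequality is trivial when the right-hand side is negative, since $d \leq l$ always.

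For item 3, under $d \leq \tau/2$ I would first secure the crude bound $l \leq 2d$ by projecting the Euclidean segment $\overline{{\bf p}{\bf q}}$ onto $\mathcal{M}$ via the nearest-point retraction $\pi$, which is well-defined on this segment because every point on it lies within distance $d/2 \leq \tau/4 < \tau$ of $\mathcal{M}$; the standard Lipschitz estimate $\|D\pi({\bf x})\|_{\mathrm{op}} \leq \tau/(\tau - d({\bf x},\mathcal{M}))$ then produces a curve from ${\bf p}$ to ${\bf q}$ on $\mathcal{M}$ of length at most $(4/3)d < 2d$, certifying $l \leq 2d$. Substituting this back into the item 2 inequality $d \geq l - l^2/(2\tau)$ rearranges to $l \leq d + l^2/(2\tau) \leq d + 2d^2/\tau$. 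The main obstacle I anticipate is the polarization step in item 1: the diagonal bound $\|II(X,X)\| \leq 1/\tau$ alone only yields $K \geq -1/\tau^2$ from the Gauss equation, so securing the off-diagonal estimate $\|II(X,Y)\| \leq 1/\tau$ --- and with it the sharper $-2/\tau^2$ --- is where the argument requires the most care.
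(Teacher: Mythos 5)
Your argument is correct, and it differs from the paper's in a meaningful way: the paper disposes of all three items almost entirely by citation (item 1 to Aamari et al.\ Proposition A.1, item 2 to Niyogi--Smale--Weinberger Lemma 6.3, item 3 to NSW/Eftekhari--Wakin in the form $l \leq \tau - \tau\sqrt{1-2d/\tau}$ followed by the elementary bound $1-\sqrt{1-x}\leq (x+x^2)/2$), whereas you give a self-contained derivation from the single pointwise estimate $\|II({\bf v},{\bf v})\|\leq 1/\tau$. For items 1 and 2 your argument is essentially what is inside the cited references (osculating-sphere bound on $II$, polarization plus the Gauss equation, then the two integrations of the acceleration bound along a minimizing geodesic). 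For item 3 the routes genuinely diverge: the cited lemmas in effect solve the quadratic inequality of item 2 for $l$ (on the branch $l\leq\tau$) and then the paper Taylor-bounds $1-\sqrt{1-x}$, while you instead use the Lipschitz constant $\tau/(\tau - d({\bf x},\mathcal M))$ of the nearest-point retraction applied to the chord $\overline{{\bf p}{\bf q}}$ to get the a priori bound $l\leq 2d$, and then bootstrap item 2. Both are valid; yours avoids the need to identify the correct root of the quadratic (which implicitly requires an argument like $l<\tau$, typically also supplied by a retraction-type step in NSW), at the price of invoking Federer's Lipschitz estimate for the retraction.

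One small point worth flagging in both your proof and the paper's: when $\mathcal M$ has boundary, a length-minimizer may run along the boundary, and there $\ddot\gamma$ need not equal $II(\dot\gamma,\dot\gamma)$, so the clean bound $\|\ddot\gamma\|\leq 1/\tau$ used in item 2 requires a slightly more careful justification (or restriction to minimizers in the interior). The paper inherits the same gap by citing NSW, which is stated for closed submanifolds. It would strengthen your write-up to state explicitly where the interior hypothesis is used, as the lemma's item 1 already does.
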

\begin{proof}
$1.$ See \cite[proposition A.1]{aamari_estimating_2019}.
$2.$ See \cite[lemma 6.3]{niyogi_finding_2008}. $3.$ See \cite[lemma 6.3]{niyogi_finding_2008} and \cite[lemma 7]{eftekhari_new_2015} which gives $l \leq \tau - \tau \sqrt{ 1 - \frac{2d}{\tau}}$. Using $1 - \sqrt{1-x} \leq \frac{x + x^2}{2}$ we get the claimed $ l \leq d + \frac{2d^2}{\tau}$. 
\end{proof}

Lemma~\ref{lem:GeovsEuclid} in combination with Bishop's volume comparison theorem \cite[Theorem 3.101, part i]{gallot_riemannian_1990} now allows us to prove the main result of this section.

\begin{theorem}
\label{thm:ManifoldCoverLowerBound}
Let $\mathcal{M}$ be a $d$-dimensional smooth submanifold of $\mathbb{R}^n$ possibly with boundary, and with volume $V$ and reach $\tau$. Let $0 < \delta \leq \frac{\tau}{2}$, and $r := \delta (1 + \frac{2\delta}{\tau})$. Let $\omega_{d} = \frac{\pi^{d/2}}{\Gamma(\frac{d}{2}+1)}$ be the volume of the $d$-dimensional Euclidean unit ball $B^{d}_2$. Then the covering number of $\mathcal{M}$ with Euclidean balls centered on $M$ of radius $\delta$, $N(\mathcal{M},\delta)$, satisfies

\begin{align*}
   \displaystyle  \frac{V}
   { \omega_{d}(8\delta)^d} \leq \frac{V}{ \omega_{d}
    (1 + \frac{2\sqrt{2}r}{\tau})^{d-1} r^d}  \leq N(\mathcal{M}, \delta) \hspace{1mm}.
\end{align*}
\end{theorem}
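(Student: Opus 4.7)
The plan is the standard volume-covering argument coupled with a Bishop-style intrinsic volume comparison. Given any $\delta$-cover $\{{\bf p}_1,\ldots,{\bf p}_N\}\subset\mathcal{M}$ by Euclidean balls of radius $\delta$ centered on $\mathcal{M}$, one has
$$V \;=\; \mathrm{Vol}_d(\mathcal{M}) \;\leq\; \sum_{i=1}^{N} \mathrm{Vol}_d\bigl(\mathcal{M}\cap B_2^n({\bf p}_i,\delta)\bigr) \;\leq\; N\cdot \sup_{{\bf p}\in\mathcal{M}}\mathrm{Vol}_d\bigl(\mathcal{M}\cap B_2^n({\bf p},\delta)\bigr),$$
so the problem reduces to producing a uniform upper bound on the $d$-dimensional volume of a single Euclidean-ball slice of $\mathcal{M}$.

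The first reduction replaces the extrinsic slice by an intrinsic geodesic ball. By Lemma~\ref{lem:GeovsEuclid}(3) and the hypothesis $\delta \leq \tau/2$, any ${\bf q}\in\mathcal{M}$ with $\|{\bf p}-{\bf q}\|_2 \leq \delta$ has intrinsic distance at most $\delta(1+2\delta/\tau)=r$ from ${\bf p}$, so $\mathcal{M}\cap B_2^n({\bf p},\delta) \subseteq B_\mathcal{M}^{\mathrm{geo}}({\bf p},r)$. Next, Lemma~\ref{lem:GeovsEuclid}(1) gives the sectional-curvature lower bound $-2/\tau^2$, hence the Ricci lower bound $\mathrm{Ric}\geq -2(d-1)/\tau^2$, and Bishop's volume comparison theorem (the Gallot-Hulin-Lafontaine result cited in the excerpt) then dominates this geodesic ball's volume by that of the corresponding ball in the constant-curvature $-2/\tau^2$ model space,
$$\mathrm{Vol}\bigl(B_\mathcal{M}^{\mathrm{geo}}({\bf p},r)\bigr) \;\leq\; d\,\omega_d \int_0^r \left(\frac{\tau}{\sqrt{2}}\sinh\!\left(\frac{\sqrt{2}\,s}{\tau}\right)\right)^{d-1} ds.$$

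To convert this into the closed form announced in the theorem, I would apply the elementary inequality $\sinh(x)\leq x(1+x)$, verifiable on $[0,\sqrt{2}]$ by twice-differentiation, which is applicable here because $\sqrt{2}\,s/\tau \leq \sqrt{2}\,r/\tau \leq \sqrt{2}$. This gives $(\tau/\sqrt{2})\sinh(\sqrt{2}\,s/\tau) \leq s(1+\sqrt{2}\,s/\tau)$, after which pulling $(1+\sqrt{2}\,r/\tau)^{d-1}$ out of the integral and using $\int_0^r s^{d-1}\,ds = r^d/d$ produces the middle bound of the theorem with room to spare, since the stated factor $(1+2\sqrt{2}\,r/\tau)^{d-1}$ is weaker. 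The leftmost inequality is then a one-line arithmetic consequence of $r \leq 2\delta$ and $1 + 2\sqrt{2}\,r/\tau \leq 1+2\sqrt{2} < 4$, which together yield $(1+2\sqrt{2}\,r/\tau)^{d-1}r^d \leq 4^{d-1}(2\delta)^d \leq (8\delta)^d$.

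The main subtlety I anticipate, rather than a true obstacle, is invoking Bishop's theorem cleanly when $\mathcal{M}$ has boundary or when $r$ exceeds the injectivity radius at ${\bf p}$. In both situations the boundary and the cut locus can only shrink the intrinsic ball's volume below the Bishop bound, so the inequality is preserved; nevertheless, phrasing this rigorously may require a brief justification (or an appeal to Bishop-Gromov in the form that applies to normal-coordinate Jacobian determinants pointwise) rather than a bald citation of a single line.
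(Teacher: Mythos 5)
Your proof matches the paper's essentially step for step: the same volume-packing reduction, the same passage from the extrinsic $\delta$-ball to the geodesic $r$-ball via Lemma~\ref{lem:GeovsEuclid}(3), the same appeal to Bishop comparison against the curvature-$(-2/\tau^2)$ model, and an elementary polynomial bound on $\sinh$ to close the integral. Your inequality $\sinh(x)\leq x(1+x)$ on $[0,\sqrt{2}]$ is in fact slightly tighter than the paper's $\sinh(x)/x \leq 1+2x$ (yielding $(1+\sqrt{2}r/\tau)^{d-1}$ in place of $(1+2\sqrt{2}r/\tau)^{d-1}$), and your closing remark about boundary and injectivity radius flags a genuine subtlety that the paper's proof also passes over without comment; the resolution you sketch (the segment domain in the tangent space only shrinks, so the Jacobian-level Bishop estimate still dominates) is the right one.
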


\begin{proof}
We claim that the volume of the intersection of $\mathcal{M}$ with a Euclidean ball having center in $\mathcal{M}$ and radius $\delta$ is no greater than $\omega_{d} (1 + \frac{2\sqrt{2}r}{\tau})^{d-1} r^d$. This claim immediately implies the rightmost inequality.\\

Toward establishing this claim, we note that for two points ${\bf p}, {\bf q} \in \mathcal{M}$ with Euclidean distance $d$, geodesic distance $l$, and having $d \leq \frac{\tau}{2}$, one has $l \leq d(1+\frac{2d}{\tau})$ by Lemma~\ref{lem:GeovsEuclid}. Hence, the intersection of $\mathcal{M}$ with a Euclidean ball of radius $\delta$ centered on $\mathcal{M}$ is contained in a geodesic ball of radius $r = \delta(1 + \frac{2\delta}{\tau})$ with the same center provided that $\delta \leq \frac{\tau}{2}$. Thus, it suffices to bound the volume of such a geodesic ball from above by $w_{d} (1 + \frac{2\sqrt{2}r}{\tau})^{d-1} r^d$ in order to obtain the rightmost inequality.\\

By Lemma~\ref{lem:GeovsEuclid} the sectional curves of $\mathcal{M}$ are bounded below by $\frac{-2}{\tau^2}$. Hence by Bishop's theorem the volume of a geodesic ball of radius $r$ in $\mathcal{M}$ is bounded above by the volume of the geodesic ball of radius $r$ in the hyperbolic space with constant sectional curvature  $\frac{-2}{\tau^2}$. Denote this volume by $V_{\frac{-2}{\tau^2}}(r)$; it is given by the formula 
\begin{align*}
    V_{\frac{-2}{\tau^2}}(r)
    =
     \frac{2\pi^{d/2}}{\Gamma(\frac{d}{2})} 
    \int_{0}^{r} S_{\frac{-2}{\tau^2}}(x)^{d-1} ~dx
\end{align*}
where $S_k(x) = \frac{1}{\sqrt{-k}}\sinh(\sqrt{-k}x)$. Since $\frac{\sinh(x)}{x}$ is increasing on $0 < x$,

\begin{align*}
    V_{\frac{-2}{\tau^2}}(r) 
    &=
     \frac{2\pi^{d/2}}{\Gamma(\frac{d}{2})} 
     \int_{0}^{r} S_{\frac{-2}{\tau^2}}(x)^{d-1} ~dx
    \leq 
    \frac{2\pi^{d/2}}{\Gamma(\frac{d}{2})} 
    \int_{0}^r \left(\frac{\sinh(\frac{\sqrt{2}x}{\tau})}
    {\frac{\sqrt{2} x}{\tau}} \right)^{d-1} x^{d-1} ~dx\\
    &\leq 
    \frac{2\pi^{d/2}}{\Gamma(\frac{d}{2})} 
    \left(\frac{\sinh(\frac{\sqrt{2}r}{\tau})}
    {\frac{\sqrt{2} r}{\tau}} \right)^{d-1} 
    \frac{r^d}{d}.
\end{align*}

Using that $\frac{\sinh(x)}{x} \leq 1 + 2x$ for $0 < x < \sqrt{2} \pi$, we finally obtain that
$$
    V_{\frac{-2}{\tau^2}}(r) \leq \frac{2\pi^{d/2}}{\Gamma(\frac{d}{2})} 
    \left(1 + \frac{2\sqrt{2}r}{\tau} \right)^{d-1} \frac{r^d}{d}
    =
    \omega_{d} \left( 1 + \frac{2\sqrt{2}r}{\tau} \right)^{d-1} r^d. 
$$
This proves the rightmost inequality. To obtain the leftmost inequality we simplify $r = \delta (1 + \frac{2\delta}{\tau})$ via $r \leq 2 \delta \leq \tau$ using that $0 < \delta \leq \frac{\tau}{2}$.  We then get the factor of $8$ from the fact that $(1 + 2\sqrt{2})2 < 8$. 
\end{proof}

In light of Theorem~\ref{thm:ManifoldCoverLowerBound}, the following technical lemma will be useful in choosing the parameter $\delta$ in Theorem~\ref{FittingItInTheBall}.

\begin{lemma} \label{Iknowcalculuskindof}
Let $d,C' \in \mathbb{R}^+$.  If  $\frac{2 C'^{\frac{1}{d}}}{\sqrt{e}} \leq \tau$ then
$$\frac{d}{2e} C'^{\frac{2}{d}} \leq \sup_{\delta \in (0,\tau/2]} \delta^2 \log \left( \frac{C'}{\delta^d} \right).$$
If $\tau \leq \frac{2 C'^{\frac{1}{d}}}{\sqrt{e}}$ then
\begin{align*}
\frac{d \tau^2}{4} \log \left( \frac{2C'^{\frac{1}{d}}}{\tau} \right)\leq \sup_{\delta \in (0,\tau/2]} \delta^2 \log \left( \frac{C'}{\delta^d} \right).
\end{align*}
\end{lemma}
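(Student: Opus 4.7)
The plan is to treat this as a one-variable calculus exercise. Let $g(\delta) := \delta^{2} \log(C'/\delta^{d}) = \delta^{2}\bigl(\log C' - d\log \delta\bigr)$ on $\delta \in (0,\infty)$. I would first locate the unique interior critical point of $g$ and verify it is a global maximum, and then in each case I either evaluate $g$ at that critical point (Case 1, when it lies in $(0,\tau/2]$) or at the right endpoint $\delta = \tau/2$ (Case 2, when the critical point has been pushed outside the interval), justified by monotonicity.

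Differentiating gives
$$g'(\delta) = 2\delta\log(C'/\delta^{d}) - d\delta = \delta\bigl(2\log(C'/\delta^{d}) - d\bigr),$$
so the unique positive critical point is determined by $\log(C'/\delta^{d}) = d/2$, namely $\delta^{*} = C'^{1/d}/\sqrt{e}$. A sign check on $g'$ shows $g$ is strictly increasing on $(0,\delta^{*})$ and strictly decreasing on $(\delta^{*},\infty)$, so $\delta^{*}$ is the global maximum of $g$ on $(0,\infty)$. Plugging in, $g(\delta^{*}) = (\delta^{*})^{2}\cdot d/2 = \dfrac{d\, C'^{2/d}}{2e}$.

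For Case 1 the hypothesis $2C'^{1/d}/\sqrt{e} \le \tau$ is exactly $\delta^{*} \le \tau/2$, so $\delta^{*} \in (0,\tau/2]$ and the supremum over this interval is at least $g(\delta^{*}) = \dfrac{d}{2e}C'^{2/d}$, as required. For Case 2 the hypothesis $\tau \le 2C'^{1/d}/\sqrt{e}$ gives $\tau/2 \le \delta^{*}$, so $g$ is nondecreasing on $(0,\tau/2]$ and the supremum is attained at $\delta = \tau/2$. Using $\log\!\bigl(C'/(\tau/2)^{d}\bigr) = d\log(2C'^{1/d}/\tau)$, direct evaluation yields $g(\tau/2) = \dfrac{d\tau^{2}}{4}\log(2C'^{1/d}/\tau)$, which is the claimed lower bound.

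There is no real obstacle here; the only subtlety is confirming in Case 2 that the bound $\dfrac{d\tau^{2}}{4}\log(2C'^{1/d}/\tau)$ is actually nonnegative so that the inequality is not vacuous — but this follows from the case hypothesis $\tau \le 2C'^{1/d}/\sqrt{e} \le 2C'^{1/d}$, which forces $\log(2C'^{1/d}/\tau) \ge 0$. The rest is routine differentiation and algebraic simplification.
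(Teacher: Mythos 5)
Your proof is correct and follows essentially the same approach as the paper's: locate the critical point $\delta^* = C'^{1/d}/\sqrt{e}$ of $g(\delta) = \delta^2\log(C'/\delta^d)$, and evaluate $g$ either at $\delta^*$ (Case 1) or at the endpoint $\tau/2$ (Case 2). You simply spell out the monotonicity check and the nonnegativity remark that the paper leaves implicit.
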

\begin{proof}
One can see that $\delta^2 \log \left( \frac{C'}{\delta^d} \right)$ is maximized at $\delta^* := C'^{\frac{1}{d}}/\sqrt{e} > 0$.  If $\delta^* \notin (0,\tau/2]$ we instead evaluate the function at $\tau / 2$.
\end{proof}

We are now prepared to prove the main theorem of this paper.

\section{The Proof of Theorem~\ref{MAINTHM}}
\label{sec:ProofofMainThm}

Applying Theorem~\ref{FittingItInTheBall} to $\mathcal{M} \subset \mathbb{R}^n$ we learn that
$$m \geq C \left(\frac{1-\epsilon}{1+\epsilon} \right)^2  \, \frac{\delta^2 \log N(\mathcal{M},\delta)}{\mathrm{diam}^2(\mathcal{M})},$$
where we may choose $\delta \in \mathbb{R}^+$ as we wish.  Restricting ourselves to $\delta \in (0,\tau / 2]$ and applying Theorem~\ref{thm:ManifoldCoverLowerBound} we further learn that
$$m \geq C \left(\frac{1-\epsilon}{1+\epsilon} \right)^2  \, \frac{\delta^2 \log \left( \frac{V}
   { \omega_{d}(8\delta)^d} \right) }{\mathrm{diam}^2(\mathcal{M})}.$$
To maximize this lower bound as a function of $\delta$ we now apply Lemma~\ref{Iknowcalculuskindof} with $C' = V/\omega_{d}8^d$.  The stated result now follows.

\bibliographystyle{abbrv}
\bibliography{refs} 

\begin{thebibliography}{10}

\bibitem{aamari_estimating_2019}
E.~Aamari, J.~Kim, F.~Chazal, B.~Michel, A.~Rinaldo, and L.~Wasserman.
\newblock Estimating the reach of a manifold.
\newblock {\em Electronic Journal of Statistics}, 13(1):1359--1399, 2019.

\bibitem{achlioptas2003database}
D.~Achlioptas.
\newblock Database-friendly random projections: Johnson-{L}indenstrauss with
  binary coins.
\newblock {\em Journal of Computer and System Sciences}, 66(4):671--687, 2003.

\bibitem{alon2003problems}
N.~Alon.
\newblock Problems and results in extremal combinatorics—i.
\newblock {\em Discrete Mathematics}, 273(1-3):31--53, 2003.

\bibitem{dasgupta1999elementary}
S.~Dasgupta and A.~Gupta.
\newblock An elementary proof of the {J}ohnson-{L}indenstrauss lemma.
\newblock {\em International Computer Science Institute, Technical Report},
  22(1):1--5, 1999.

\bibitem{eftekhari_new_2015}
A.~Eftekhari and M.~B. Wakin.
\newblock New analysis of manifold embeddings and signal recovery from
  compressive measurements.
\newblock {\em Applied and Computational Harmonic Analysis}, 39(1):67--109,
  July 2015.

\bibitem{federer_curvature_1959}
H.~Federer.
\newblock Curvature measures.
\newblock {\em Transactions of the American Mathematical Society},
  93(3):418--418, Mar. 1959.

\bibitem{foucart_mathematical_2013}
S.~Foucart and H.~Rauhut.
\newblock {\em A {Mathematical} {Introduction} to {Compressive} {Sensing}}.
\newblock Applied and {Numerical} {Harmonic} {Analysis}. Springer New York, New
  York, NY, 2013.

\bibitem{gallot_riemannian_1990}
S.~Gallot, D.~Hulin, and J.~Lafontaine.
\newblock {\em Riemannian geometry}, volume~2.
\newblock Springer, 1990.

\bibitem{larsen_et_al:LIPIcs:2016:6203}
K.~G. Larsen and J.~Nelson.
\newblock {The Johnson-Lindenstrauss Lemma Is Optimal for Linear Dimensionality
  Reduction}.
\newblock In {\em 43rd International Colloquium on Automata, Languages, and
  Programming (ICALP 2016)}, pages 82:1--82:11.

\bibitem{larsen2017optimality}
K.~G. Larsen and J.~Nelson.
\newblock Optimality of the {J}ohnson-{L}indenstrauss lemma.
\newblock In {\em 2017 IEEE 58th Annual Symposium on Foundations of Computer
  Science (FOCS)}, pages 633--638. IEEE, 2017.

\bibitem{lindenstrauss1984extensions}
W.~J.~J. Lindenstrauss.
\newblock Extensions of {L}ipschitz maps into a {H}ilbert space.
\newblock {\em Contemp. Math}, 26:189--206, 1984.

\bibitem{niyogi_finding_2008}
P.~Niyogi, S.~Smale, and S.~Weinberger.
\newblock Finding the {Homology} of {Submanifolds} with {High} {Confidence}
  from {Random} {Samples}.
\newblock {\em Discrete \& Computational Geometry}, 39(1-3):419--441, Mar.
  2008.

\bibitem{vershynin_high-dimensional_2018}
R.~Vershynin.
\newblock {\em High-dimensional probability: an introduction with applications
  in data science}.
\newblock Number~47 in Cambridge series in statistical and probabilistic
  mathematics. Cambridge University Press, Cambridge ; New York, NY, 2018.

\end{thebibliography}

\end{document}